\newtheorem{theorem}{Theorem}[section]
\newtheorem{cor}[theorem]{Corollary}
\newtheorem{corollary}[theorem]{Corollary}
\theoremstyle{definition}
\theoremstyle{remark}
\newtheorem{remark}[theorem]{Remark}
\numberwithin{equation}{section}
\newcommand{\RR}[1]{\mathbb{#1}}
\newcommand{\rd}{{\mathbb R^d}}
\newcommand{\rr}{{\mathbb R}}
\newcommand{\eqd}{\sim}
\begin{document}

\title{\bf Brownian subordinators and fractional Cauchy problems}

\author{Boris Baeumer}
\address{Boris Baeumer, Department of Mathematics and Statistics, University of Otago, PO. Box 56,
Dunedin, NZ}
\email{bbaeumer@maths.otago.ac.nz}

\author{Mark M. Meerschaert}
\address{Mark M. Meerschaert, Department of Probability and Statistics,
Michigan State University, East Lansing, MI 48823}
\email{mcubed@stt.msu.edu}
\urladdr{http://www.stt.msu.edu/$\sim$mcubed/}
\thanks{MMM was partially supported by NSF grant DMS-0417869.}

\author{Erkan Nane}
\address{Erkan Nane, Department of Probability and Statistics,
Michigan State University, East Lansing, MI 48823}
\email{nane@stt.msu.edu}

\begin{abstract}
A Brownian time process is a Markov process subordinated to the
absolute value of an independent one-dimensional Brownian motion.
Its transition densities solve an initial value problem involving
the square of the generator of the original Markov process.  An
apparently unrelated class of processes, emerging as the scaling
limits of continuous time random walks, involve subordination to the
inverse or hitting time process of a classical stable subordinator.
The resulting densities solve fractional Cauchy problems, an
extension that involves fractional derivatives in time.  In this
paper, we will show a close and unexpected connection between these
two classes of processes, and consequently, an equivalence between
these two families of partial differential equations.
\end{abstract}

\keywords{Fractional diffusion, L\'{e}vy process, Cauchy problem, iterated Brownian motion, Brownian subordinator, Caputo derivative}

\maketitle

\section{Introduction}

The goal of this paper is to establish a connection between two
seemingly disparate classes of subordinated stochastic processes,
and their governing equations.  The first class of processes uses
Brownian motion as a subordinator
\cite{allouba2,allouba1,burdzy1,burdzy2,burdzy-khos,bukh}.  The
second class uses the inverse or hitting time process of a
nondecreasing stable L\'evy process as a subordinator
\cite{Zsolution, couplePR, limitCTRW, coupleCTRW, ultraslow2}.  One
application of the Brownian subordinator involves scaling limits of
transverse diffusion in a crack \cite{bukh}.  Inverse L\'evy
subordinators occur in scaling limits of continuous time random
walks used to model anomalous diffusion in physics \cite{MetzlerK,
MetzlerKlafter, SKW}, finance \cite{GMSR, coupleEcon, scalas1}, and
hydrology \cite{WRRapply, BensonTiPM, Eulerian}.  Neither class of
processes is Markovian.  However, their transition densities satisfy
certain abstract differential equations that characterize the
evolution of the process.  Given a Markov process whose transition
densities solve a Cauchy problem, a Brownian subordinator yields
another process whose one dimensional distributions solve a related
initial value problem involving the square of the generator of the
Markov semigroup.  Subordinating the same original Markov process to
an inverse stable L\'evy process leads to a fractional Cauchy
problem, where the integer time derivative is replaced by a
fractional derivative whose order equals the stable index.  Both
situations lead to anomalous subdiffusion, where probability mass
spreads slower than the classical $t^{1/2}$ rate seen in Brownian
motion.  In this paper we will show that, when the order of the
fractional derivative equals 1/2, the two governing equations have
the same unique solution, and hence the two processes have the same
one dimensional distributions.  Similar observations apply for other
values of the fractional time derivative, and there are connections
to subordination by a symmetric stable process.  In applications to
physics, the principal appeal of both classes is that the
subordinators have a pleasant scaling property, that can be
inherited by the subordinated processes.

A L\'evy process $B(t)$ has stationary, independent increments  and
$t\mapsto B(t)$ is continuous in probability, see for example
Bertoin \cite{bertoin} or Sato \cite{sato}.  Classical Brownian
motion in $\rd$ is the special case where $B(t)$ has a mean zero
normal distribution.  In this case, the probability distribution
$p(x,t)$ of $x=B(t)$ solves the diffusion equation $\partial
p/\partial t= D\, \Delta p$ for some $D>0$, where $\Delta=\sum_i
\partial^2/\partial x_i^2$.  The random walk model of particle
motion, where particles undergo independent identically
distributed jumps at regularly spaced interval in time, converges
to Brownian motion as both the spatial and time scales increase to
infinity. This close connection between Brownian motion and the
diffusion equation is due to Bachelier \cite{Bachelier} and
Einstein \cite{Einstein}.  Sokolov and Klafter
\cite{SokolovKlafter} discuss modern extensions to include heavy
tailed particle jumps and random waiting times, leading to
fractional diffusion equations, the simplest of which is
$\partial^\beta p/\partial t^\beta= -D\, (-\Delta)^{\alpha/2} p$.
The fractional Laplacian reflects heavy tailed particle jumps,
where the probability of a jump magnitude exceeding some large
$r>0$ diminishes like $r^{-\alpha}$, and the fractional time
derivative codes heavy tailed waiting times, where the probability
of waiting longer than $t$ falls off like $t^{-\beta}$.  Heavy
tails in space lead to superdiffusion, where a plume of particles
spreads faster than the classical $t^{1/2}$ rate associated with
Brownian motion.  Heavy tails in time lead to subdiffusion, since
long waiting times retard particle motion.  See Metzler and
Klafter \cite{MetzlerK,MetzlerKlafter} for a recent survey.

A completely different model of particle motion emerges from  the
theory of random walks in random media, pioneered by Sinai
\cite{Sinai}.  These models lead to interesting localization
phenomena \cite{FIN, Prosen} not seen in the case of random waiting
times \cite{Bisquert}.  A closely related problem is diffusion on
fractals \cite{Barlow90,BarlowBass,BarlowParkins}.  One particular
application, diffusion in a crack, led Burdzy and Khoshnevisan
\cite{bukh} to consider Brownian subordinators.  In particular, a
two-sided Brownian motion subordinated to an independent Brownian
subordinator yields the scaling limit of transverse diffusion in a
crack.  Essentially, the crack is modeled using the sample path of
the outer process, and the inner process or subordinator codes the
subdiffusive effect of restricted motion in the crack.  Given this
history, it would be difficult to imagine that these two classes of
non-Markovian processes exhibit any close connections.  However, we
will show that they are very closely related.

\section{Background}

Let $B(t)$ be a Brownian motion on $\rd$ and $Y_{t}$ another
independent one-dimensional Brownian motion.  Allouba and Zheng
\cite{allouba2,allouba1} introduce a process they call Brownian time
Brownian motion (BTBM) defined as $Z_t=B(|Y_t|)$.  A related process
called iterated Brownian motion process (IBM) was first considered
by Burdzy \cite{burdzy1}.  Take $B_1(t)$, $B_2(t)$ to be independent
Brownian motions on $\rd$, and $Y_{t}$ as before.  Define a
two-sided Brownian motion by $B(t)=B_1(t)$ for $t\geq 0$ and
$B_2(-t)$ for $t<0$.  Then the IBM process is defined by
$Z_t=B(Y_{t})$.  Various extensions of the BTBM or IBM have been
considered, including a general Markov outer process
\cite{allouba2,allouba1} and a symmetric stable subordinator studied
by Nane \cite{nane3, nane4}.  Similarly, the excursion-based
Brownian-time process is defined by breaking up the path of $|Y(t)|$
into excursion intervals (i.e., maximal intervals of time on which
$|Y(t)|>0$) and, on each such interval, choosing an independent copy
of the Markov process from a finite or an infinite collection.  A
simple conditioning argument shows that all these Brownian time
processes have the same transition density functions, and hence the
same one-dimensional distributions \cite{allouba1}.  A similar
remark holds for symmetric stable subordinators.  That these
processes are non-Markovian can be seen by noting that the
transition densities do not solve the Kolmogorov equations.

Allouba and Zheng \cite{allouba1} and DeBlassie  \cite{deblassie}
show that for iterated Brownian motion (IBM) $Z_t=B(Y_{t})$ the
function
$$
u(t,x)=E_{x}[f(Z_{t})]:=E[f(Z_{t})|Z_0=x]
$$
solves the initial value problem
\begin{equation}\label{ibm-pde0}
\frac{\partial}{\partial t}u(t,x) =
\frac{{\Delta}f(x)}{\sqrt{\pi t}}+ {\Delta}^{2}u(t,x); \quad u(0,x) = f(x)
\end{equation}
for $t>0$ and $x\in \rd$.  The non-Markovian property of IBM is reflected in the appearance
of the time-variable initial term in the PDE.

For a Markov process $X$, the family of linear operators
$T(t)f(x)=E_x[f(X(t))]=E[f(X(t))|X(0)=x]$ forms a bounded
continuous semigroup
 on the Banach space $L^1(\rd)$, and the generator $L_x f(x)=\lim_{h\downarrow 0}
  h^{-1}(T(h)f(x)-f(x))$ is defined on a dense subset of that space, see for example \cite{ABHN,HiPh}.  Then
$u(t,x)=T(t)f(x)$ solves the Cauchy problem
\begin{equation}\label{semigroup-0}
\frac{\partial}{\partial t}u(t,x) =L_xf(x); \quad u(0,x) = f(x)
\end{equation}
for $t>0$ and $x\in \rd$.
Allouba and Zheng \cite{allouba1}  show that if we replace the
outer process $B(t)$  with a continuous Markov process $X(t)$, the
same result holds, except that we replace the Laplacian in the PDE
\eqref{ibm-pde0} with the generator $L_x$ of the continuous
semigroup associated with this Markov process. That is, $u(t,x)=E_{x}[f(Z_{t})]$
solves the initial value problem
\begin{equation}\label{ibm-pde00}
\frac{\partial}{\partial t}u(t,x) =
\frac{{L_x}f(x)}{\sqrt{\pi t}}+ {L_x}^{2}u(t,x); \quad u(0,x) = f(x)
\end{equation}
for $t>0$ and $x\in \rd$.

\begin{remark}
Iterated Brownian motion (IBM) has many properties analogous to
those of Brownian motion.   The process $Z_t$ has stationary  (but
not independent) increments, and is self-similar with index $1/4$,
meaning that $Z_{ct}$ and $c^{1/4}Z_t$ have the same finite
dimensional distributions for every $c>0$.  Burdzy (1993)
\cite{burdzy1} showed that IBM satisfies a Law of the iterated
logarithm (LIL):
$$
\limsup_{t\to\infty}\frac{Z_t}{t^{1/4}(\log \log
(1/t))^{3/4}}=\frac{2^{5/4}}{3^{3/4}} \ \ \  a.s.
$$
A Chung-type LIL by was established by Khoshnevisan  and Lewis
\cite{klewis} and Hu et al. \cite{hu}.  Khoshnevisan and Lewis
\cite{koslew} extended results of Burdzy \cite{burdzy2} to develop a
stochastic calculus for IBM. Local times of IBM were studied by
Burdzy and Khosnevisan \cite{burdzy-khos},  Cs\'{a}ki et al.
\cite{CCFR}, Shi and Yor \cite{shi-yor}, Xiao \cite{xiao}, and Hu
\cite{hu-2}.  Ba\~{n}uelos and DeBlassie \cite{bandeb} studied the
distribution of exit place for IBM in cones.  Nane studied the
lifetime asymptotics of IBM on bounded and unbounded domains
\cite{nane,nane2, nane6}, and generalized isoperimetric-type
inequalities to IBM \cite{nane5}.
\end{remark}

\begin{remark}
Funaki \cite{funaki} considered a different version of iterated
Brownian motion, and studied the PDE connection of that process.
Nane \cite{nane3} established the PDE connection of a related class
of processes defined in a similar manner, replacing the inner time process with a symmetric stable process.
\end{remark}

Zaslavsky \cite{zaslavsky} introduced the fractional kinetic
equation
\begin{equation}\label{frac-derivative-0}
\frac{\partial^\beta}{\partial t^\beta}u(t,x) =
{L_x}u(t,x); \quad u(0,x) = f(x)
\end{equation}
for Hamiltonian chaos, where $0< \beta <1$ and $L_x$ is the
generator of some continuous Markov process $X_0(t)$ started at
$x=0$. Here $\partial^{\beta} g(t)/\partial t^\beta $ is the Caputo
fractional derivative in time, which can be defined as the inverse
Laplace transform of $s^{\beta}\tilde{g}(s)-s^{\beta -1}g(0)$, with
$\tilde{g}(s)=\int_{0}^{\infty}e^{-st}g(t)dt$ the usual Laplace
transform. Baeumer and Meerschaert \cite{fracCauchy} and Meerschaert
and Scheffler \cite{limitCTRW} show that the fractional Cauchy
problem (\ref{frac-derivative-0}) is related to a certain class of
subordinated stochastic processes.  Take $D_t$ to be the stable
subordinator, a L\'evy process with strictly increasing sample paths
such that $E[e^{-sD_t}]=e^{-ts^\beta}$, see for example
\cite{bertoin,sato}.  Define the inverse or hitting time or first
passage time process
\begin{equation}\label{Edef}
E_t=\inf\{x>0: D(x)>t\} .
\end{equation}
The subordinated process $Z_t=X_0(E_{t})$ occurs as the scaling
limit of a continuous time random walk (also called a renewal reward
process), in which iid random jumps are separated by iid positive
waiting times \cite{limitCTRW}. If the waiting times $J_i$ satisfy
$P(J_i>t)=t^{-\beta}L(t)$ where $0<\beta<1$ and $L(t)$ is slowly
varying, then they belong to the strict domain of attraction of a
stable law with index $\beta$, and their partial sum process
$S_t=J_1+\cdots+J_{[t]}$ converges after rescaling to the process
$D_t$, in the Skorokhod $J_1$ topology on $D(\rr^+,\rr)$. The number
of jumps by time $t>0$ is given by the inverse process
$N_t=\max\{n:S_n\leq t\}$ and \cite{limitCTRW} shows that a rescaled
version of $N_t$ converges to the hitting time process $E_t$ in the
same topology.  Similarly, if the iid vector particle jumps $Y_i$
independent of the waiting times satisfy a multivariable regular
variation condition (or if they have a finite covariance matrix),
then the partial sum process $V(t)=Y_1+\cdots+Y_{[t]}$ converges
after linear operator rescaling to the operator L\'evy motion
$X_0(t)$, in the Skorokhod $J_1$ topology on $D(\rr^+,\rd)$, see
\cite[Theorem 4.1]{limitCTRW}.  An operator L\'evy motion is a
L\'evy process such that $X_0(t)$ has a centered operator stable
distribution, see \cite[Example 11.2.18]{RVbook}. This means that
$X_0(ct)$ and $c^E X_0(t)$ are identically distributed for all
$c>0$, for some linear operator $E$, see for example
\cite{JurekMason,RVbook}.  The continuous time random walk $V(S_t)$
models the location of a particle at time $t>0$.  A continuous
mapping argument, under some mild technical conditions, yields
convergence of the rescaled CTRW to the subordinated process
$X_0(E_t)$ in the somewhat weaker $M_1$ topology, see
\cite{limitCTRW}. Given a Banach space and a bounded continuous
semigroup $T(t)$ on that space with generator $L_x$, it is well
known that $p(t,x)=T(t)f(x)$ is the unique solution to the abstract
Cauchy problem
\begin{equation}\label{ACP}
\frac{\partial}{\partial t}p(t,x) =L_x p(t,x); \quad p(0,x) = f(x)
\end{equation}
for any $f$ in the domain of $L_x$ see for example \cite{ABHN,pazy}.
Theorem 3.1 in \cite{fracCauchy} shows that, in this case, the formula
\begin{equation}\label{ACPsoln}
u(t,x) =\int_0^\infty p((t/s)^\beta,x) g_\beta(s)\,ds
\end{equation}
yields the unique strong solution to the fractional Cauchy problem
\eqref{frac-derivative-0}.  Here $g_\beta(t)$ is the smooth
density of the stable subordinator, such that the Laplace
transform $\tilde g_\beta(s)=\int_0^\infty
e^{-st}g_\beta(t)\,dt=e^{-s^\beta}$.  Choose $x\in\rd$ and let
$X(t)=x+X_0(t)$.  It follows from Theorem 5.1 in \cite{limitCTRW}
that, in the special case where $T(t)f(x)=E_x[f(X(t))]$ is the
semigroup on $L_1(\rd)$ associated with the operator L\'evy motion
$X(t)$, the same formula \eqref{ACPsoln} also equals
$u(t,x)=E_x[f(Z_t)]$ where $Z_t=X(E_t)$ is the CTRW scaling limit
process.  Hence the subordinated process $Z_t$ is the stochastic
solution to the fractional Cauchy problem
\eqref{frac-derivative-0} in this case.

In the case where $X_0(t)$ is a L\'evy process started at zero and
$X(t)=x+X_0(t)$ for $x\in\rd$, the generator $L_x$ of the
semigroup $T(t)f(x)=E_x[f(X(t))]$ is a pseudo-differential
operator \cite{applebaum,Jacob,schilling} that can be explicitly
computed by inverting the L\'evy representation. The L\'{e}vy
process $X_0(t)$ has characteristic function $$E[\exp(ik\cdot
X_0(t))]=\exp(t\psi(k))$$ with
$$
\psi (k)=ik\cdot a-\frac{1}{2}k\cdot Qk+ \int_{y\neq 0}\left(
e^{ik\cdot y}-1-\frac{ik\cdot y}{1+||y||^{2}}\right)\phi(dy),
$$
where $a\in \RR{R}^{d}$, $Q$ is a nonnegative definite matrix, and
$\phi$ is a $\sigma$-finite Borel measure on $\RR{R}^{d}$ such
that
$$
\int_{y\neq 0}\min \{1,||y||^{2}\}\phi(dy)<\infty,
$$
see for example \cite[Theorem 3.1.11]{RVbook} and \cite[Theorem
1.2.14]{applebaum}. Let
$$\hat{f}(k)=\int_{\RR{R}^{d}}e^{-ik\cdot
x}f( x)\,dx$$
denote the Fourier transform. Theorem 3.1 in
\cite{fracCauchy} shows that $L_x f(x)$ is the inverse Fourier
transform of $\psi(k)\hat{f}(k)$ for all $f\in D(L_x)$, where
$$
D(L_x)=\{ f\in L^{1}(\RR{R}^{d}):\ \psi(k)\hat{f}(k)=\hat{h}(k)\
\exists \ h \in L^{1}(\RR{R}^{d})  \},
$$
and
\begin{equation}\begin{split}\label{pseudoDO}
L_x f(x)&= a\cdot\nabla f(x) +\frac{1}{2}\nabla \cdot Q\nabla
f(x)\\
&+ \int_{y \neq  0} \left(  f(x+y)-f(x)-\frac{\nabla
f(x)\cdot y}{1+y^{2}} \right)\phi(dy)
\end{split}\end{equation}
for all $f\in W^{2,1}(\RR{R}^{d})$, the Sobolev space of
$L^{1}$-functions whose first and  second partial derivatives are
all $L^{1}$-functions.  This includes the special case where
$X_0(t)$ is an operator L\'evy motion.  We can also write
$L_x=\psi(-i\nabla)$ where $\nabla=(\partial/\partial
x_1,\ldots,\partial/\partial x_d)'$.  For example, if $X_0(t)$ is
spherically symmetric stable then $\psi(k)=-D\|k\|^\alpha$ and
$L_x=-D(-\Delta)^{\alpha/2}$, a fractional derivative in space,
using the correspondence $k_{j}\to -i\partial/\partial x_j$ for
$1\leq j\leq d$.  If $X_0$ has independent stable marginals, then
one possible form is $\psi(k)=D\sum_j (ik_j)^{\alpha_j}$ and
$L_x=D\sum_j \partial^{\alpha_j}/\partial x_j^\alpha$ using
Riemann-Liouville fractional derivatives in each variable.  This
form does not coincide with the fractional Laplacian unless all
$\alpha_j=2$.

\begin{remark}
The literature on fractional calculus uses a different semigroup definition. For example, in \cite{fracCauchy} the
semigroup associated with a L\'evy process $X_0(t)$ started at
$x=0$ is defined by
\[T_{FC}(t)f(x)=E[f(x-X_0(t))]=\int_\rd f(x-y)P_{X_0(t)}(dy) .\]
One physical interpretation of this formula, when $f(x)$ is the probability
density of a random variable $W$, is that $W$ represents the
location of a randomly selected particle at time $t=0$, and
$T_{FC}(t)f(x)$ is the probability density of the random particle
location $W+X_0(t)$ at time $t>0$.  In this paper, we use the
semigroup definition from the Markov process literature, based on
the process $X(t)=x+X_0(t)$, so that
\[T(t)f(x)=E_x[f(X(t))]=\int_\rd f(x+y)P_{X_0(t)}(dy) .\]
Clearly this is just a matter of replacing $X_0$ by $-X_0$.   The
paper \cite{fracCauchy} also uses a different definition $\hat
f_{FC}(k)=\int e^{ik\cdot x} f(x)\,dx$ for the Fourier transform.
Each of these two changes implies a change of $k$ to $-k$ in the
formula for the Fourier transform of the semigroup, and hence the
Fourier symbol $\psi(k)$ is the same for both.  However, the
interpretation of the Fourier symbol as a pseudo-differential
operator changes.  In the notation of this paper, the derivative
$\nabla f(x)$ has Fourier transform $(ik)\hat f(k)$ but in the
notation of \cite{fracCauchy} the Fourier transform is $(-ik)\hat
f(k)$.  Hence the generator $L_x=\psi(i\nabla)$ in that paper, which
explains why equation \eqref{pseudoDO} differs from the
corresponding formula (8) in  \cite{fracCauchy}.
\end{remark}

Any Markov process semigroup operator $T(t)f(x)=E_x[f(X(t))]$ is a pseudo-differential operator
\[T(t)f(x)=(2\pi)^{-d}\int_\rd e^{ik\cdot x} \lambda_t(x,k)\hat f(k)\,dk\]
on the space of rapidly decreasing functions \cite[Theorem
1.4]{JSMarkov}.   Under some mild conditions, if the domain $D(L_x)$
of the generator of the extension of this semigroup to the Banach
space of bounded continuous functions contains all smooth functions
with compact support, then one can write
\begin{equation}\begin{split}\label{pseudoDOMarkov}
L_x f(x)&= a(x)\cdot\nabla f(x) +\frac{1}{2}\nabla \cdot Q(x)\nabla
f(x)\\
&+ \int_{y \neq  0} \left(  f(x+y)-f(x)-\frac{\nabla
f(x)\cdot y}{1+y^{2}} \right)\phi(dy,x)
\end{split}\end{equation}
where $a(x)\in \RR{R}^{d}$, $Q(x)$ is a nonnegative definite matrix, and
$\phi(dy,x)$ is a $\sigma$-finite Borel measure on $\RR{R}^{d}$ for each $x\in\rd$ such
that
$$
\int_{y\neq 0}\min \{1,||y||^{2}\}\phi(dy,x)<\infty,
$$
see \cite{Jacob98}.  In this case, the Fourier symbol
$$
\psi (k,x)=ik\cdot a(x)-\frac{1}{2}k\cdot Q(x)k+ \int_{y\neq 0}\left(
e^{ik\cdot y}-1-\frac{ik\cdot y}{1+||y||^{2}}\right)\phi(dy,x),
$$
and $L_x f(x)=\psi(-i\nabla,x)f(x)$ is the inverse Fourier transform of $\psi(k,x)\hat{f}(k)$.

\begin{remark}
A connection between time-fractional equations and Brownian  time
was also noticed by Orsingher and Behgin \cite{OB}, using a very
different approach.  They show that the density functions of
iterated Brownian motion solve the one-dimensional time-fractional
equation $\partial^{1/2} u/\partial t^{1/2}=\partial^2 u/\partial
x^2$ by considering this equation as the end-member of a class of
fractional telegraph equations.
\end{remark}

\section{ Main results}

The results of this section establish a connection between two
seemingly distinct classes of subordinated stochastic processes.
Markov processes are stochastic solutions to the abstract Cauchy
problem \eqref{ACP}.  Brownian subordinators yield stochastic
solutions to an initial value problem \eqref{ibm-pde00} involving
the square of the Markov generator.  Inverse stable subordinators
yield solutions to a fractional Cauchy problem
\eqref{frac-derivative-0} with the same Markov generator.  In the
case where the stable index $\beta=1/2$, our first results shows
that these two equations have the same solutions, and hence, the
subordinated processes have the same one-dimensional distributions.

\begin{theorem}\label{unified}
Let $L_{x}$ be the generator of a continuous Markov semigroup
$T(t)f(x)=E_{x}[f(X_{t})]$, and take $f\in D(L_{x})$ the domain of
the generator.  Then, both the initial value problem
\eqref{ibm-pde00}, and the fractional Cauchy problem
\eqref{frac-derivative-0} with $\beta=1/2$, have the same solution
given by
\begin{equation}\label{ACPsoln1}\begin{split}
u(t,x) =\frac{2}{\sqrt{4\pi t}}\int_{0}^{\infty}T(s)f(x)\exp\left(-\frac{s^2}{4t}\right)ds .
\end{split}\end{equation}
\end{theorem}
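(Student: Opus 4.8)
The starting observation is that \eqref{ACPsoln1} is simply $u(t,x)=E_x[f(X_{|Y_t|})]$, the Brownian subordinator applied to the given Markov process: if $Y$ is a one-dimensional Brownian motion whose generator is $\partial^2/\partial s^2$, then $|Y_t|$ has density $\tfrac{2}{\sqrt{4\pi t}}e^{-s^2/(4t)}$ on $s>0$, so conditioning on $|Y_t|=s$ gives $E_x[f(X_{|Y_t|})]=\int_0^\infty T(s)f(x)\,\tfrac{2}{\sqrt{4\pi t}}e^{-s^2/(4t)}\,ds$, which is \eqref{ACPsoln1}. With this identification, the first assertion is exactly the Allouba--Zheng result quoted in the Background section: $u(t,x)=E_x[f(Z_t)]$ solves \eqref{ibm-pde00} whenever the outer process is a continuous Markov process and $f\in D(L_x)$. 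For the second assertion I would show that \eqref{ACPsoln1} coincides with the known solution \eqref{ACPsoln} of the fractional Cauchy problem at $\beta=1/2$: substituting $s\mapsto(t/s)^{1/2}$ in \eqref{ACPsoln}, with the explicit stable density $g_{1/2}(s)=\tfrac1{2\sqrt\pi}s^{-3/2}e^{-1/(4s)}$, turns $\int_0^\infty p((t/s)^{1/2},x)g_{1/2}(s)\,ds$ into $\int_0^\infty T(s)f(x)\,\tfrac1{\sqrt{\pi t}}e^{-s^2/(4t)}\,ds$, which is again \eqref{ACPsoln1}; Theorem 3.1 of \cite{fracCauchy} then identifies this as the unique strong solution of \eqref{frac-derivative-0} with $\beta=1/2$.

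For a self-contained argument covering both problems (and delivering uniqueness for \eqref{ibm-pde00} as well) I would pass to the Laplace transform in $t$. Using the classical identity $\int_0^\infty e^{-\lambda t}\tfrac1{\sqrt{\pi t}}e^{-s^2/(4t)}\,dt=\lambda^{-1/2}e^{-s\sqrt\lambda}$ and the resolvent formula $\int_0^\infty e^{-\mu s}T(s)f\,ds=(\mu-L_x)^{-1}f$ for $\mathrm{Re}\,\mu>0$ (valid since $T(t)$ is a bounded strongly continuous semigroup), Fubini gives
\[
\widetilde u(\lambda,x)=\lambda^{-1/2}\int_0^\infty e^{-\sqrt\lambda\,s}T(s)f(x)\,ds=\lambda^{-1/2}\bigl(\sqrt\lambda-L_x\bigr)^{-1}f(x).
\]
One then checks this is the transform of the solution of each problem. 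Transforming \eqref{frac-derivative-0} with $\beta=1/2$, and using that the Caputo derivative of order $1/2$ transforms to $\lambda^{1/2}\widetilde u-\lambda^{-1/2}f$, gives $(\sqrt\lambda-L_x)\widetilde u=\lambda^{-1/2}f$, which is exactly the display. Transforming \eqref{ibm-pde00}, with $1/\sqrt{\pi t}$ having transform $\lambda^{-1/2}$ and $u(0,x)=f$, gives $(\lambda-L_x^2)\widetilde u=f+\lambda^{-1/2}L_xf$; writing $\lambda-L_x^2=(\sqrt\lambda-L_x)(\sqrt\lambda+L_x)$ and $f+\lambda^{-1/2}L_xf=\lambda^{-1/2}(\sqrt\lambda+L_x)f$, and using that resolvents of $L_x$ commute with $L_x$, one verifies directly that $\lambda^{-1/2}(\sqrt\lambda-L_x)^{-1}f$ satisfies this equation, while a solution with zero data must satisfy $(\sqrt\lambda-L_x)(\sqrt\lambda+L_x)\widetilde v=0$, hence $(\sqrt\lambda+L_x)\widetilde v=0$ by injectivity of $\sqrt\lambda-L_x$, forcing $\widetilde v=0$ provided $-\sqrt\lambda$ is not an eigenvalue of $L_x$. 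Inverting the transform recovers \eqref{ACPsoln1} and the uniqueness.

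The two places that need care are these. First, domain bookkeeping: the term $L_x^2u$ in \eqref{ibm-pde00} looks as though it requires $f\in D(L_x^2)$, but the resolvent identity $L_x(\mu-L_x)^{-1}=\mu(\mu-L_x)^{-1}-I$ shows $L_x^2\widetilde u=\sqrt\lambda(\sqrt\lambda-L_x)^{-1}f-f-\lambda^{-1/2}L_xf$, which is defined for every $f\in D(L_x)$; the same cancellation must be tracked if instead one verifies \eqref{ibm-pde00} directly by differentiating \eqref{ACPsoln1} under the integral and integrating by parts twice in $s$ (using $\partial_t g=\partial_s^2g$ for $s>0$, the Neumann condition $\partial_s g(0,t)=0$, $g(0,t)=1/\sqrt{\pi t}$, and $\partial_s T(s)f=T(s)L_xf$), where the boundary terms vanish at $s=0$ by the Neumann condition and at $s=\infty$ by Gaussian decay together with $\sup_s\|T(s)\|<\infty$. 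Second, and this is the real obstacle, the passage from the transform identities back to genuine strong solutions and to uniqueness: for \eqref{frac-derivative-0} this is Theorem 3.1 of \cite{fracCauchy}, while for \eqref{ibm-pde00} it is where the technical conditions (regularity of $t\mapsto u(t,x)$, absence of point spectrum of $L_x$ in the relevant region) must be invoked. Matching normalizations --- that the inner Brownian motion here has generator $\partial^2/\partial s^2$, so that $g(0,t)=1/\sqrt{\pi t}$ agrees with the coefficient in \eqref{ibm-pde00} --- is routine but should be stated explicitly.
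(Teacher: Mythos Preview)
Your first paragraph is essentially the paper's own proof: identify \eqref{ACPsoln1} as $E_x[f(X_{|Y_t|})]$ via the density of $|Y_t|$, invoke Allouba--Zheng for \eqref{ibm-pde00}, and match \eqref{ACPsoln1} with the formula \eqref{ACPsoln} by the explicit $g_{1/2}$ and a change of variables, then cite Theorem~3.1 of \cite{fracCauchy} for \eqref{frac-derivative-0}. That is exactly what the paper does, and it is correct.

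Your second and third paragraphs go beyond the paper's proof of this theorem and parallel what the paper does separately as Theorem~\ref{unified2}, but by a genuinely different mechanism: the paper restricts to L\'evy outer processes and works with the Fourier symbol $\psi(k)$, taking Fourier--Laplace transforms and matching the rational expressions $(s-\psi^2)^{-1}(1+s^{-1/2}\psi)\hat f$ and $(s^{1/2}-\psi)^{-1}s^{-1/2}\hat f$; you instead stay at the operator level and use the resolvent factorisation $\lambda-L_x^2=(\sqrt\lambda-L_x)(\sqrt\lambda+L_x)$. Your route is more general in principle (no L\'evy assumption, no Fourier), and your domain remark that $L_x^2\widetilde u$ makes sense for $f\in D(L_x)$ via the resolvent identity is a nice point the paper does not make. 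The cost is the uniqueness step: you need $-\sqrt\lambda\notin\sigma_p(L_x)$, which is not automatic for generators of bounded semigroups, whereas the paper's Fourier argument sidesteps this by working scalar-by-scalar in $k$ and appealing to Laplace-transform uniqueness for continuous functions. The paper in fact explicitly exhibits non-uniqueness for \eqref{ibm-pde00} on a half-line at the end of the section, so your caveat is not idle; if you keep the resolvent argument, state the spectral hypothesis as an assumption rather than a side remark.
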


\begin{proof}
Note that the Markov semigroup $T(t)f(x)=E_{x}[f(X_{t})]$ is a
uniformly bounded and strongly continuous semigroup on the Banach
space $L^1(\rd)$.  Then Theorem 3.1 in Baeumer and Meerschaert
\cite{fracCauchy} shows that the function $u(t,x)$ defined by
\eqref{ACPsoln} is the unique solution of the fractional Cauchy
problem (\ref{frac-derivative-0}) for any $f\in D(L_{x})$.  In the
case $\beta=1/2$, we have \cite[Example 1.3.19]{applebaum}
\[g_{1/2} (x)=\frac{1}{\sqrt{4\pi x^3}}\exp\left(-\frac{1}{4x}\right)\]
and then a change of variables shows that
\begin{equation}\label{ACPsoln2}\begin{split}
u(t,x) =\frac{t}{\beta}\int_{0}^{\infty}p(x,s)g_{\beta}(\frac{t}{s^{1/\beta}})s^{-1/\beta -1}ds
=\int_{0}^{\infty}p(x,s)q(t,s)ds
\end{split}\end{equation}
where $p(x,t)=T(t)f(x)$ and
\begin{equation}\begin{split}\label{Ydens}
q(t,s)=2tg_{1/2}({t}/{s^{2}})s^{-3}
&=\frac{2t}{s^{3} \sqrt{4\pi t^3 / s^6}}\exp\left(-\frac{s^2}{4t}\right)\\
&= \frac{2}{\sqrt{4\pi t}}\exp\left(-\frac{s^2}{4t}\right)
\end{split}\end{equation}
is a probability density on $s>0$ for all $t>0$.  Hence we have that \eqref{ACPsoln1}
is the unique solution to the fractional Cauchy problem (\ref{frac-derivative-0}).

Allouba and Zheng \cite{allouba1} show that the initial  value
problem \eqref{ibm-pde00} has solution $u(t,x)=E_{x}[f(Z_{t})]$
where $Z_t=B(|Y_t|)$ is the IBM process.  It is not hard to check
that the function $q(t,s)$ in \eqref{Ydens} is the probability
density of the Brownian subordinator $|Y_t|$.  Then a simple
conditioning argument shows that
\begin{equation}\label{AZsoln}
u(t,x)=E_{x}[f(Z_{t})]=\int_{0}^{\infty}q(t,s)p(x,s)ds
\end{equation}
where $p(x,s)=T(s)f(x)=E_{x}[f(X_{t})]$ is the unique  solution to
the initial value problem \eqref{ACP}.  Hence both the fractional
Cauchy problem (\ref{frac-derivative-0}) and the initial value
problem \eqref{ibm-pde00} have the same solution.
\end{proof}

\begin{corollary}\label{new-ibm-pde}
For $f\in D(\Delta_x)$, both the initial value problem
\begin{equation}\label{ibm-pde3}
\frac{\partial}{\partial t}u(t,x) =
\frac{{\Delta_x}f(x)}{\sqrt{\pi t}}+ \Delta_x^{2}u(t,x); \quad u(0,x) = f(x)
\end{equation}
and the fractional Cauchy problem
\begin{equation}\label{frac-derivative3}
\frac{\partial^{1/2}}{\partial t^{1/2}}u(t,x) =
{\Delta_x}u(t,x); \quad u(0,x) = f(x)
\end{equation}
have the same solution given by \eqref{ACPsoln}, where
\begin{equation}\label{Bdensity}
p(t,x)=T(t)f(x)=\int_\rd f(x-y) (4\pi t)^{-d/2}\exp\left(-\frac {\|y\|^2}{4t}\right)\,dy .
\end{equation}
\end{corollary}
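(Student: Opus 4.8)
The plan is to obtain Corollary~\ref{new-ibm-pde} as the special case of Theorem~\ref{unified} in which the continuous Markov semigroup is the Gaussian (heat) semigroup and its generator is the Laplacian. First I would check that the family of operators $T(t)$ defined by \eqref{Bdensity} really is a uniformly bounded, strongly continuous semigroup on $L^{1}(\rd)$: it is the transition semigroup $T(t)f(x)=E_{x}[f(X_{t})]$ of the Markov process $X_{t}=x+B_{t}$, where $B_{t}$ is the Brownian motion on $\rd$ whose coordinates have variance $2t$, so that $B_{t}$ has continuous sample paths and the kernel in \eqref{Bdensity} is just the corresponding normal density. Taking Fourier transforms gives $\widehat{T(t)f}(k)=e^{-t\|k\|^{2}}\hat f(k)$, from which one reads off that the generator $L_{x}$ of this semigroup acts as multiplication by $-\|k\|^{2}$ in Fourier space, i.e., $L_{x}=\Delta_{x}$ on its natural domain $D(\Delta_{x})\subset L^{1}(\rd)$; this is classical, see for example \cite{ABHN,pazy}.

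With this identification in hand, the second step is simply to substitute $L_{x}=\Delta_{x}$ into Theorem~\ref{unified}. The initial value problem \eqref{ibm-pde00} then becomes exactly \eqref{ibm-pde3}, and the fractional Cauchy problem \eqref{frac-derivative-0} with $\beta=1/2$ becomes exactly \eqref{frac-derivative3}. Since $f\in D(\Delta_{x})=D(L_{x})$ by hypothesis, Theorem~\ref{unified} applies and yields that both equations have the common solution \eqref{ACPsoln1}.

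Finally I would observe that \eqref{ACPsoln1} is the same function as \eqref{ACPsoln} with $p(t,x)=T(t)f(x)$ given by \eqref{Bdensity}: this is precisely the change of variables $s\mapsto(t/s)^{1/2}$ already carried out in \eqref{ACPsoln2}--\eqref{Ydens} in the proof of Theorem~\ref{unified}, using $g_{1/2}(x)=(4\pi x^{3})^{-1/2}\exp(-1/(4x))$. Hence the common solution can equally be written in the form \eqref{ACPsoln}, which is exactly the assertion of the corollary. There is no genuine obstacle here; the only point that needs a moment's care is checking that the Gaussian kernel in \eqref{Bdensity}, which has variance $2t$ rather than $t$, is the one whose generator is $\Delta_{x}$ exactly, and not some scalar multiple of it, so that \eqref{ibm-pde3} and \eqref{frac-derivative3} come out with the coefficients as stated.
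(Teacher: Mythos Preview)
Your proposal is correct and follows exactly the paper's own approach: apply Theorem~\ref{unified} with the Gaussian (heat) semigroup, whose generator is $\Delta_x$, and note that \eqref{Bdensity} is the corresponding transition kernel. The paper's proof is a one-line version of what you wrote; your additional checks (Fourier symbol, variance normalization, equivalence of \eqref{ACPsoln1} and \eqref{ACPsoln}) are all appropriate and routine.
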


\begin{proof}
Apply Theorem \ref{unified} and note that \eqref{Bdensity} is the density of the Brownian motion Markov process with generator $\Delta_x$.
\end{proof}

Suppose that $X(t)$ is an operator L\'evy motion,
$E_t=\inf\{x>0:D_x>t\}$ is the inverse or hitting time process of
the stable subordinator $D_t$ with $E[e^{-sD_t}]=e^{-ts^\beta}$, and
$Z_t=X(E_t)$.  Let $L_x$ be the generator of the semigroup
$T(t)f(x)=E_x[f(X(t))]$, a pseudo-differential operator defined by
\eqref{pseudoDO}.  It follows from Theorem 5.1 in \cite{limitCTRW}
that, for any initial condition $f\in D(L_{x})$, the function
$u(t,x)=E_x[f(Z_t)]$ solves the fractional Cauchy problem
(\ref{frac-derivative-0}).   Hence we call the non-Markovian process
$Z_t$ the stochastic solution to this abstract partial differential
equation.  The following theorem extends this result to a broader
class of driving processes.

\begin{theorem}\label{Esoln}
For any continuous Markov semigroup $X(t)$ with generator  $L_{x}$,
let $E_t=\inf\{x>0:D_x>t\}$ be the inverse or hitting time process
of the stable subordinator $D_t$, independent of $X$, with
$E[e^{-sD_t}]=e^{-ts^\beta}$ for some $0<\beta<1$, and take
$Z_t=X(E_t)$. Then for any initial condition $f\in D(L_{x})$, the
function $u(t,x)=E_x[f(Z_t)]$ solves the fractional Cauchy problem
\eqref{frac-derivative-0}.
\end{theorem}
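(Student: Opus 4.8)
The plan is to show that the function $u(t,x)=E_x[f(Z_t)]$ is exactly the function defined by the Baeumer--Meerschaert formula \eqref{ACPsoln}, which by Theorem 3.1 in \cite{fracCauchy} is the unique strong solution of the fractional Cauchy problem \eqref{frac-derivative-0}. As noted in the proof of Theorem \ref{unified}, the operators $T(s)f(x)=E_x[f(X(s))]$ form a uniformly bounded, strongly continuous semigroup on $L^1(\rd)$ with generator $L_x$, so for $f\in D(L_x)$ that theorem applies verbatim, and it only remains to identify the probabilistic meaning of its solution. The point of the extension over Theorem 5.1 of \cite{limitCTRW} is that the operator L\'evy structure of the outer process is never used: the argument below depends on $X$ only through the fact that $p(s,x):=T(s)f(x)$ solves the Cauchy problem \eqref{ACP} and is continuous in $s\ge 0$.

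First I would pin down the law of the hitting time $E_t$ for fixed $t>0$. From the definition \eqref{Edef}, together with the fact that $x\mapsto D_x$ has c\`adl\`ag, strictly increasing paths, one gets the identity $\{E_t\le x\}=\{D_x\ge t\}$, hence $P(E_t\le x)=P(D_x\ge t)=1-P(D_x\le t)$ since $D_x$ has a continuous distribution. The stable subordinator is self-similar, $D_x\eqd x^{1/\beta}D_1$, because $E[e^{-sD_x}]=e^{-xs^\beta}=E[e^{-(x^{1/\beta}s)D_1}]$; writing $g_\beta$ for the density of $D_1$ this gives $P(D_x\le t)=\int_0^{tx^{-1/\beta}}g_\beta(w)\,dw$, and differentiating in $x$ shows that $E_t$ has density
\[ h_t(s)=\frac{t}{\beta}\,s^{-1/\beta-1}\,g_\beta\!\big(t s^{-1/\beta}\big),\qquad s>0 . \]
This is precisely the kernel $q(t,s)$ appearing in \eqref{ACPsoln2}; in the case $\beta=1/2$ it reduces to the Brownian subordinator density \eqref{Ydens}.

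Next, conditioning on the process $E$, which is independent of $X$, and using strong continuity of $T(\cdot)$ on $L^1(\rd)$ to justify the Fubini interchange,
\[ u(t,x)=E_x[f(X(E_t))]=\int_0^\infty E_x[f(X(s))]\,P(E_t\in ds)=\int_0^\infty T(s)f(x)\,h_t(s)\,ds . \]
The change of variables recorded in \eqref{ACPsoln2} (substitute $s=(t/r)^\beta$) rewrites the right-hand side as $\int_0^\infty T\big((t/r)^\beta\big)f(x)\,g_\beta(r)\,dr$, which is exactly \eqref{ACPsoln}. Therefore $u(t,x)=E_x[f(Z_t)]$ coincides with the unique strong solution of \eqref{frac-derivative-0} furnished by Theorem 3.1 of \cite{fracCauchy}, which proves the theorem.

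I expect the only genuinely delicate point to be the technical justification of the interchanges in the last display: that $s\mapsto T(s)f(x)$ is jointly measurable and suitably dominated in $L^1$ (which follows from strong continuity and the contraction bound $\|T(s)f\|_1\le\|f\|_1$ of the Markov semigroup), and that the conditioning identity $E_x[f(X(E_t))]=\int_0^\infty T(s)f(x)\,P(E_t\in ds)$ is legitimate given the independence of $X$ and $E$. Everything else --- the hitting-time identity, the self-similarity computation, and the change of variables --- is routine, and in particular no sample-path regularity or infinite divisibility of the outer process $X$ enters, which is exactly the content gained over \cite{limitCTRW}.
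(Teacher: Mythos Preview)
Your proof is correct and follows essentially the same approach as the paper: invoke Theorem 3.1 of \cite{fracCauchy} for the formula \eqref{ACPsoln}, identify the density of $E_t$ as $q(t,s)=t\beta^{-1}s^{-1/\beta-1}g_\beta(ts^{-1/\beta})$, and then use a conditioning argument together with the change of variables \eqref{ACPsoln2}. The only difference is that you derive the hitting-time density directly from self-similarity, whereas the paper simply cites Corollary 3.1 of \cite{limitCTRW} for that fact.
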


\begin{proof}
The proof is very similar to Theorem \ref{unified} above.   Theorem
3.1 in \cite{fracCauchy} shows that the function $u(t,x)$ defined by
\eqref{ACPsoln} is the (unique) solution of the fractional Cauchy
problem (\ref{frac-derivative-0}) for any $f\in D(L_{x})$.  A change
of variables shows that \eqref{ACPsoln2} holds, where $g_\beta$ is
the density of the random variable $D_1$. Corollary 3.1 in
\cite{limitCTRW} shows that the function
$q(t,s)=t\beta^{-1}g_{\beta}(s^{-1/\beta}t)s^{-1/\beta -1}$ is the
density of the hitting time $E_t$, and then the result follows by a
simple conditioning argument.
\end{proof}

\begin{corollary}\label{new-ibm-pde2}
For any continuous Markov process $X(t)$, both the  Brownian-time
subordinated process $X(|Y_t|)$ and the process $X(E_t)$
subordinated to the inverse $1/2$-stable subordinator have the same
one-dimensional distributions.  Hence they are both stochastic
solutions to the fractional Cauchy problem
\eqref{frac-derivative-0}, or equivalently, to the higher order
initial value problem \eqref{ibm-pde00}.
\end{corollary}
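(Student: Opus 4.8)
The plan is to obtain the corollary by specializing Theorems~\ref{unified} and~\ref{Esoln} to $\beta=1/2$ and comparing the resulting representations. Write $Z_t^{(1)}=X(|Y_t|)$ for the Brownian-time process and $Z_t^{(2)}=X(E_t)$ for the process subordinated to the inverse $1/2$-stable subordinator, and fix $f\in D(L_x)$. From the proof of Theorem~\ref{unified}, the conditioning argument \eqref{AZsoln} gives $E_x[f(Z_t^{(1)})]=\int_0^\infty q(t,s)\,p(x,s)\,ds$ with $p(x,s)=T(s)f(x)$ and $q(t,s)$ the density of $|Y_t|$ exhibited in \eqref{Ydens}; from the proof of Theorem~\ref{Esoln} with $\beta=1/2$, the same type of conditioning argument gives $E_x[f(Z_t^{(2)})]=\int_0^\infty \tilde q(t,s)\,p(x,s)\,ds$ with $\tilde q(t,s)=2t\,g_{1/2}(t/s^2)\,s^{-3}$ the density of $E_t$. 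The one genuinely computational ingredient is the identity $\tilde q\equiv q$: inserting the explicit stable density $g_{1/2}(x)=(4\pi x^3)^{-1/2}\exp(-1/(4x))$ into $\tilde q$ reproduces exactly the half-normal density $\frac{2}{\sqrt{4\pi t}}\exp(-s^2/(4t))$, which is precisely the chain of equalities in \eqref{Ydens}. Hence $E_x[f(Z_t^{(1)})]=E_x[f(Z_t^{(2)})]$ for every $f\in D(L_x)$, both sides coinciding with the common formula \eqref{ACPsoln1}.

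Next I would promote this to equality of the one-dimensional laws of $Z_t^{(1)}$ and $Z_t^{(2)}$. Since $D(L_x)$ is dense in $L^1(\rd)$ and, under the mild conditions recalled in Section~2, contains the smooth compactly supported functions, it is a measure-determining class; a routine approximation --- mollifying a bounded continuous test function by elements of $D(L_x)$ and passing to the limit by dominated convergence --- extends the identity $E_x[g(Z_t^{(1)})]=E_x[g(Z_t^{(2)})]$ to all bounded continuous $g$, which is exactly equality of the one-dimensional distributions for each fixed starting point $x$.

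Finally, the remaining assertions are immediate. By Theorem~\ref{Esoln} with $\beta=1/2$, the function $u(t,x)=E_x[f(Z_t^{(2)})]$ solves the fractional Cauchy problem \eqref{frac-derivative-0}, so $Z_t^{(2)}$ is a stochastic solution of it, and by the equality of laws just established so is $Z_t^{(1)}$; symmetrically, the Allouba--Zheng result recalled in the proof of Theorem~\ref{unified} shows that $E_x[f(Z_t^{(1)})]$ solves the higher-order initial value problem \eqref{ibm-pde00}, hence so does $E_x[f(Z_t^{(2)})]$, and the equivalence of \eqref{frac-derivative-0} at $\beta=1/2$ with \eqref{ibm-pde00} is precisely the content of Theorem~\ref{unified}. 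I do not expect a serious obstacle here, since essentially all of the analysis is already carried out in the two preceding theorems; the only points needing care are (i) checking that the two conditioning arguments produce literally the same integral kernel, i.e.\ that the inverse $1/2$-stable density equals the half-normal density of $|Y_t|$, which is the computation in \eqref{Ydens}, and (ii) confirming that agreement of $E_x[f(\cdot)]$ over $f\in D(L_x)$ suffices to identify the laws, which follows from the density and measure-determining property of the generator domain.
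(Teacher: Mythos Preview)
Your proof is correct and rests on the same key observation as the paper's: the density of $|Y_t|$ and the density of the inverse $1/2$-stable hitting time $E_t$ are both given by the half-normal kernel $q(t,s)$ in \eqref{Ydens}. The paper's argument is slightly more direct than yours, however: once $\tilde q\equiv q$ is established, the subordinators $|Y_t|$ and $E_t$ themselves have the same law, so the conditioning identity $E_x[f(X(|Y_t|))]=\int_0^\infty T(s)f(x)\,q(t,s)\,ds=E_x[f(X(E_t))]$ holds for \emph{all} bounded measurable $f$, and equality of one-dimensional distributions is immediate. Your step (ii)---restricting first to $f\in D(L_x)$ and then upgrading via a density/approximation argument---is therefore unnecessary.
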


\begin{proof}
In the proof of Theorem \ref{unified} we noted that  the function
$q(t,s)$ in \eqref{Ydens} is the probability density of the Brownian
subordinator $s=|Y_t|$ where $Y_t$ is a standard Brownian motion.
The proof of Theorem \ref{Esoln} shows that this function is also
the probability density of the inverse $1/2$-stable subordinator
$s=E_t$.  Then the result follows from a simple conditioning
argument.
\end{proof}

\begin{remark}
The proof of Theorem \ref{unified} relies on Theorem 0.1 in Allouba
and  Zheng \cite{allouba1}, but the proof of Theorem 0.1 in that
paper may not be completely rigorous.  The essential argument, using
integration by parts twice, is that
\begin{equation*}\begin{split}
\frac{\partial}{\partial t}u(t,x)
&=\int_{0}^{\infty}T(s)f(x)\ \frac{\partial}{\partial t}q(t,s)ds\\
&=\int_{0}^{\infty}T(s)f(x)\ \frac{\partial^2}{\partial s^2}q(t,s)ds\\
&=q(t,s)\frac{\partial}{\partial s}\left[T(s)f(x)\right]\bigg\vert_{s=0}
+\int_{0}^{\infty}\frac{\partial^2}{\partial s^2}\left[T(s)f(x)\right]q(t,s)ds\\
&=q(t,0)L_x[T(0)f(x)]+\int_{0}^{\infty}L_x^2\left[T(s)f(x)\right]q(t,s)ds\\
&=\frac 1{\sqrt{\pi t}}L_xf(x)+L_x^2\int_{0}^{\infty}T(s)f(x)\ q(t,s)ds\\
\end{split}\end{equation*}
which shows that \eqref{AZsoln} solves the higher order initial
value problem \eqref{ibm-pde00}.  In a later paper \cite{allouba2},
Allouba points out that the last step where the operator $L_x$ is
passed outside the integral is not obvious, and adds this as a
technical condition, which is then verified in the special case
$L_x=\Delta_x$ the Laplacian operator.
\end{remark}

The next result is a restatement of Theorem \ref{unified} for
L\'evy semigroups.  The proof does not use Theorem 0.1 in Allouba
and Zheng \cite{allouba1}, rather it relies on a Laplace-Fourier
transform argument.  We will use the following notation for the
Laplace, Fourier, and Fourier-Laplace transforms (respectively):
\begin{equation*}\begin{split}
\tilde{u}(s,x)&=\int_{0}^{\infty}e^{-st}u(t, x)dt ; \\
\hat{u}(t,k)&=\int_{\RR{R}^{d}}e^{ik\cdot x}u(t, x)dx ; \\
\bar{u}(s,k)&=\int_{\RR{R}^{d}}e^{ik\cdot x}\int_{0}^{\infty}e^{-st}
u(t, x)dtdx .
\end{split}\end{equation*}

\begin{theorem}\label{unified2}
Suppose that $X(t)=x+X_0(t)$ where $X_0(t)$ is a L\'evy process
starting at zero.  If $L_x$ is the generator \eqref{pseudoDO} of the
semigroup $T(t)f(x)=E_x[(f(X_t))]$ on $L^1(\rd)$, then for any $f\in
D(L_{x})$, both the initial value problem \eqref{ibm-pde00}, and the
fractional Cauchy problem \eqref{frac-derivative-0} with
$\beta=1/2$, have the same unique solution given by
\eqref{ACPsoln1}.
\end{theorem}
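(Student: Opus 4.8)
The plan is to reduce both equations to a single algebraic identity in Fourier--Laplace space and then to check that \eqref{ACPsoln1} has exactly that transform. Because $X_0(t)$ is a L\'evy process, $L_x$ is the pseudo-differential operator \eqref{pseudoDO} with Fourier symbol $\psi(k)$, so the space Fourier transform turns $L_x$ into multiplication by $\psi(k)$ and therefore $L_x^2$ into multiplication by $\psi(k)^2$; the hypothesis $f\in D(L_x)$ is precisely what guarantees that $\psi(k)\hat f(k)$ is again the Fourier transform of an $L^1$ function, so these manipulations stay inside $L^1(\rd)$. Moreover $|E[e^{ik\cdot X_0(t)}]|=|e^{t\psi(k)}|\le 1$ for every $t>0$ forces $\mathrm{Re}\,\psi(k)\le 0$ for all $k$, which is what keeps the denominators below away from zero for $s>0$.

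First I would take the Fourier transform in $x$ and the Laplace transform in $t$ of each problem. For \eqref{ibm-pde00}, using $\partial_t u\mapsto s\bar u(s,k)-\hat f(k)$ and $\mathcal{L}[(\pi t)^{-1/2}](s)=s^{-1/2}$, one gets
\[
s\,\bar u(s,k)-\hat f(k)=\psi(k)\hat f(k)\,s^{-1/2}+\psi(k)^2\,\bar u(s,k),
\]
so that $\bar u(s,k)=(1+\psi(k)s^{-1/2})\hat f(k)/(s-\psi(k)^2)$. For the fractional problem \eqref{frac-derivative-0} with $\beta=1/2$, using the Caputo rule $\partial^{1/2}_t u\mapsto s^{1/2}\bar u(s,k)-s^{-1/2}\hat f(k)$ recalled above, one gets
\[
s^{1/2}\,\bar u(s,k)-s^{-1/2}\hat f(k)=\psi(k)\,\bar u(s,k),
\]
so that $\bar u(s,k)=s^{-1/2}\hat f(k)/(s^{1/2}-\psi(k))$. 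Writing $s-\psi(k)^2=(s^{1/2}-\psi(k))(s^{1/2}+\psi(k))$ and $1+\psi(k)s^{-1/2}=(s^{1/2}+\psi(k))s^{-1/2}$, the first expression collapses to $\hat f(k)/(s^{1/2}(s^{1/2}-\psi(k)))$, which is the second. Hence both problems share the Fourier--Laplace transform
\[
\bar u(s,k)=\frac{\hat f(k)}{s-s^{1/2}\psi(k)},
\]
and since $\mathrm{Re}(s-s^{1/2}\psi(k))=s^{1/2}(s^{1/2}-\mathrm{Re}\,\psi(k))\ge s>0$ the right-hand side is a well-defined function of $k$ for each $s>0$; by injectivity of the Laplace and Fourier transforms this determines the solution uniquely.

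It remains to identify \eqref{ACPsoln1} with this transform. Writing \eqref{ACPsoln1} as $u(t,x)=\int_0^\infty q(t,r)\,T(r)f(x)\,dr$ with $q(t,r)=(\pi t)^{-1/2}\exp(-r^2/4t)$ as in \eqref{Ydens}, the Fourier transform in $x$ gives $\hat u(t,k)=\hat f(k)\int_0^\infty q(t,r)e^{r\psi(k)}\,dr$, and then the Laplace transform in $t$, with Fubini justified since $\mathrm{Re}\,\psi(k)\le 0$, gives $\bar u(s,k)=\hat f(k)\int_0^\infty \tilde q(s,r)e^{r\psi(k)}\,dr$, where $\tilde q(s,r)=\int_0^\infty e^{-st}(\pi t)^{-1/2}e^{-r^2/4t}\,dt=s^{-1/2}e^{-r\sqrt{s}}$ is the classical Laplace pair associated with the $1/2$-stable subordinator. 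The remaining integral in $r$ is elementary and yields $\bar u(s,k)=\hat f(k)/(s-s^{1/2}\psi(k))$, matching the common transform above. Together with the existence and uniqueness of the solution of \eqref{frac-derivative-0} from Theorem 3.1 of \cite{fracCauchy}, this shows that \eqref{ACPsoln1} is the unique solution of both problems.

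The main obstacle is not the algebra but the rigorous justification of the transform manipulations: one must know that $u(\cdot,x)$ is regular enough in $t$ (absolutely continuous with controlled behaviour at $0$ and $\infty$) for the Laplace-transform formulas for $\partial_t$ and the Caputo derivative $\partial_t^{1/2}$ to be valid, that $\partial_t u$, $L_x u$ and $L_x^2 u$ are Laplace transformable in $t$ as $L^1(\rd)$-valued functions, and that the Fourier transform genuinely intertwines $L_x$ with multiplication by $\psi$ on the relevant domain. Most of this regularity can be read off from the representation \eqref{ACPsoln1} and the semigroup property of $T(t)$, exactly as in the proof of Theorem 3.1 of \cite{fracCauchy}. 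A secondary issue is to state precisely the class within which uniqueness for the higher-order problem \eqref{ibm-pde00} is asserted, namely solutions possessing a Fourier--Laplace transform, which is the class to which the injectivity argument applies.
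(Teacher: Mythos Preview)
Your proposal is correct and follows essentially the same Fourier--Laplace strategy as the paper: both compute the transform of each equation, factor $s-\psi(k)^2=(s^{1/2}-\psi(k))(s^{1/2}+\psi(k))$ to see the two transforms agree, and then identify \eqref{ACPsoln1} as the unique function with that transform. The only minor difference is in the last step: the paper first invokes Theorem 3.1 of \cite{fracCauchy} to know \eqref{ACPsoln} solves \eqref{frac-derivative-0}, takes its space Fourier transform, and then appeals to continuity in $t$ plus Laplace uniqueness, whereas you compute the Fourier--Laplace transform of \eqref{ACPsoln1} directly via the classical pair $\int_0^\infty e^{-st}(\pi t)^{-1/2}e^{-r^2/4t}\,dt=s^{-1/2}e^{-r\sqrt s}$ and an elementary $r$-integral; both routes land on the same expression.
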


\begin{proof}
Take Fourier transforms on both sides of \eqref{ibm-pde00} to get
\begin{equation}\label{mmm1}
\frac{\partial \hat{u}(t,k)}{\partial t}=\frac{1}{\sqrt{\pi
t}}\psi (k) \hat{f}(k)+ \psi (k)^{2}\hat{u}(t,k)
\end{equation}
using the fact that $\psi(k)\hat{f}(k)$ is the Fourier transform  of
$L_x f(x)$.  Then take Laplace transforms on both sides to get
\[s\bar{u}(s,k)-\hat{u}(t=0, k)= s^{-1/2}\psi (k) \hat{f}(k)+\psi (k)^{2}\bar{u}(s,k) ,\]
using the well-known Laplace transform formula
$$\int_0^\infty \frac{t^{-\beta}}{\Gamma(1-\beta)}e^{-st}dt=s^{\beta-1}$$
for $\beta<1$.  Since $\hat{u}(t=0, k)=\hat{f}(k)$, collecting
like terms yields
\begin{equation}\label{LFT}
\bar{u}(s,k)=\frac{(1+s^{-1/2}\psi (k))\hat{f}(k)}{s-\psi
(k)^{2}}
\end{equation}
for $s>0$ sufficiently large.

On the other hand, taking Fourier transforms on both sides of
(\ref{frac-derivative-0}) with $\beta=1/2$ gives
\begin{equation}\label{mmm2}
\frac{\partial^{1/2}\hat{u}(t,k)}{\partial t^{1/2}}=\psi(k)\hat{u} (t,k)
\end{equation}
Take Laplace transforms on both sides, using the fact that
$s^{\beta}\tilde g(s)-s^{\beta-1}g(0)$ is the Laplace transform of
the Caputo fractional derivative $\partial^\beta g(t)/\partial
t^\beta$, to get
$$
s^{1/2}\bar{u}(s,k)-s^{-1/2}\hat{f}(k)=\psi(k)\bar{u}(s,k)
$$
and collect terms to obtain
\begin{equation}\label{LFT2}\begin{split}
\bar{u}(s,k)&=\frac{s^{-1/2}\hat{f}(k)}{s^{1/2}-\psi(k)}\\
&= \frac{s^{-1/2}\hat{f}(k)}{s^{1/2}-\psi(k)}\cdot \frac{s^{1/2}+\psi (k)}{s^{1/2}+\psi (k)}\\
&= \frac{(1+s^{-1/2}\psi (k))\hat{f}(k)}{s-\psi
(k)^{2}}
\end{split}\end{equation}
which agrees with \eqref{LFT}.  For any fixed $k\in\rd$, the two
formulas are well-defined and equal for all $s>0$ sufficiently
large.

Theorem 3.1 in Baeumer and Meerschaert \cite{fracCauchy} implies
that the function $u(t,x)$ defined by \eqref{ACPsoln} solves the
fractional Cauchy problem (\ref{frac-derivative-0}) in $L^1(\rd)$
for any $f\in D(L_{x})$.  Take Fourier transforms in \eqref{ACPsoln}
and apply the Fubini theorem to get
\begin{equation}\label{mmm3}
\hat{u}(t,k)=\int_{0}^{\infty}\hat{f}(k)\exp((t/s)^{1/2}\psi(k))
g_{1/2}(s)\,ds .
\end{equation}
Note that $\exp((t/s)^{1/2}\psi(k))$ is bounded since $\psi(k)$ is
negative definite, and then a simple dominated convergence argument
shows that $\hat{u}(t,k)$ is continuous in $t>0$ for any $k\in\rd$.
Hence the uniqueness  theorem for Laplace transforms \cite[Theorem
1.7.3]{ABHN} shows that, for each $k\in\rd$, \eqref{mmm3} is the
unique continuous function whose Laplace transform is given by
either \eqref{LFT} or \eqref{LFT2}.  Since $x\mapsto u(t,x)$ is an
element of $L^1(\rd)$ for every $t>0$, and since two elements of
$L^1(\rd)$ with the same Fourier transform are equal $dx$-almost
everywhere,  \eqref{ACPsoln1} is the unique element of $L^1(\rd)$
whose Fourier transform equals \eqref{mmm3}.

Now if $u(t,x)$ is any solution to \eqref{ibm-pde00}, then it  has
Fourier-Laplace transform $\bar u(s,k)$ given by \eqref{LFT} or
equivalently by \eqref{LFT2}.  Since $\hat  u(t,k)$ solves
\eqref{mmm1}, it is differentiable in $t>0$ and hence continuous.
Then the above argument shows that this solution is equal to
\eqref{ACPsoln} $dx$-almost everywhere for every $t>0$.  Hence
\eqref{ACPsoln} solves both the higher order Cauchy problem
\eqref{ibm-pde00}, and the fractional Cauchy problem
\eqref{frac-derivative-0} with $\beta=1/2$, for any $f\in D(L_{x})$,
and it is the unique solution as an element of $L^1(\rd)$, i.e., it
is unique a.e.-$dx$ for every $t>0$.  The proof of Theorem
\ref{unified} shows that \eqref{ACPsoln} reduces to \eqref{ACPsoln1}
in this case.
\end{proof}

\begin{remark}
Uniqueness to \eqref{frac-derivative-0} was shown by Bajlekova in
\cite[Corollary 3.2]{Bajlekova2001}; Allouba  and Zheng
\cite{allouba1} did not discuss uniqueness of solutions.
\end{remark}

\begin{remark}
It is reasonable to conjecture that Theorem \ref{unified2} can  be
extended to a sub-class of Markov processes $X(t)$ whose generators
are pseudo-differential operators with negative definite symbols
$\psi(k,x)$.  One technical difficulty is that $\psi(k,x)^n$ is
usually not the Fourier symbol of a semigroup generator for integers
$n>1$.

\end{remark}

The Fourier-Laplace transform method can be extended to establish
connections between other fractional Cauchy problems and their
corresponding higher-order initial value problems.  The next result
gives one such correspondence.

\begin{theorem}\label{third}
Suppose that $X(t)=x+X_0(t)$ where $X_0(t)$ is a L\'evy process
starting at zero.  If $L_x$ is the generator \eqref{pseudoDO} of the
semigroup $T(t)f(x)=E_x[(f(X_t))]$, then for any $f\in D(L_{x})$,
both the initial value problem
\begin{equation}\begin{split}\label{one-third} \frac{\partial
u(t,x)}{\partial t} & = \frac{t^{-2/3}}{\Gamma(1/3)}L_x
f(x)+\frac{t^{-1/3}}{\Gamma(2/3)}L_x^2 f(x) + L_x^3u(t,x);\quad
u(0,x) =  f(x)
\end{split}\end{equation}
and the fractional Cauchy problem \eqref{frac-derivative-0} with
$\beta=1/3$, have the same unique solution given by \eqref{ACPsoln}
where $g_{1/3}(t)$ is the probability density of the $1/3$-stable
subordinator, so that $\int_0^\infty
e^{-st}g_{1/3}(t)\,dt=e^{-s^{1/3}}$ for all $s>0$.
\end{theorem}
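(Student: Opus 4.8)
The plan is to reproduce, essentially verbatim, the Fourier–Laplace transform argument from the proof of Theorem \ref{unified2}, with the value $\beta=1/2$ replaced by $\beta=1/3$ and the quadratic rationalization replaced by a cubic one. First I would take Fourier transforms on both sides of the higher-order initial value problem \eqref{one-third}, using that $\psi(k)\hat f(k)$ is the Fourier transform of $L_x f(x)$, to obtain an ordinary differential equation in $t$ for $\hat u(t,k)$ with forcing terms $\Gamma(1/3)^{-1}t^{-2/3}\psi(k)\hat f(k)$ and $\Gamma(2/3)^{-1}t^{-1/3}\psi(k)^2\hat f(k)$. Both exponents lie in $(0,1)$, so these forcing terms are locally integrable; taking Laplace transforms and invoking $\int_0^\infty \Gamma(1-\gamma)^{-1}t^{-\gamma}e^{-st}\,dt=s^{\gamma-1}$ with $\gamma=2/3$ and $\gamma=1/3$ respectively, together with $\hat u(0,k)=\hat f(k)$, I would collect terms to get
\[
\bar u(s,k)=\frac{\bigl(1+s^{-1/3}\psi(k)+s^{-2/3}\psi(k)^2\bigr)\hat f(k)}{s-\psi(k)^3}
\]
for all $k$ and all $s>0$ sufficiently large.

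Next I would carry out the same computation for the fractional Cauchy problem \eqref{frac-derivative-0} with $\beta=1/3$: the Fourier transform gives $\partial^{1/3}\hat u/\partial t^{1/3}=\psi(k)\hat u$, and the Laplace transform, using that $s^{1/3}\tilde g(s)-s^{-2/3}g(0)$ is the transform of the Caputo derivative of order $1/3$, yields $\bar u(s,k)=s^{-2/3}\hat f(k)/(s^{1/3}-\psi(k))$. Multiplying numerator and denominator by $s^{2/3}+s^{1/3}\psi(k)+\psi(k)^2$ and using the factorization $s-\psi(k)^3=(s^{1/3}-\psi(k))(s^{2/3}+s^{1/3}\psi(k)+\psi(k)^2)$ turns this into exactly the displayed expression, so the two Fourier–Laplace transforms coincide for each fixed $k$ and all large $s$.

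The remainder is structurally identical to Theorem \ref{unified2}. Theorem 3.1 of \cite{fracCauchy} gives that $u(t,x)$ defined by \eqref{ACPsoln} with $\beta=1/3$ solves the fractional Cauchy problem in $L^1(\rd)$ for $f\in D(L_x)$; Fourier transforming \eqref{ACPsoln} and applying Fubini yields $\hat u(t,k)=\int_0^\infty\hat f(k)\exp((t/s)^{1/3}\psi(k))g_{1/3}(s)\,ds$, which is continuous in $t>0$ by dominated convergence since $\psi(k)$ is negative definite and so $\exp((t/s)^{1/3}\psi(k))$ is bounded. By the uniqueness theorem for Laplace transforms \cite[Theorem 1.7.3]{ABHN} this is the unique continuous function with the transform computed above. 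Conversely, any solution of \eqref{one-third} has $\hat u(t,k)$ differentiable, hence continuous, in $t$ and has Fourier–Laplace transform equal to the displayed formula, so it agrees $dx$-a.e.\ with \eqref{ACPsoln} for each $t>0$; uniqueness as an element of $L^1(\rd)$ then follows since two $L^1$ functions with the same Fourier transform coincide a.e.

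I expect the only real difficulty to be bookkeeping: matching the fractional exponents $2/3$ and $1/3$ and the constants $\Gamma(1/3)^{-1}$, $\Gamma(2/3)^{-1}$ in \eqref{one-third} to the Laplace transform identity, and confirming that the two locally integrable forcing terms cause no convergence trouble. The cubic factorization and all subsequent steps are routine adaptations of the $\beta=1/2$ case; in particular, exactly as in the remark following Theorem \ref{unified2}, one never needs $\psi(k)^3$ to be the Fourier symbol of a semigroup generator, since existence of the solution is supplied entirely by the fractional Cauchy problem side via \cite{fracCauchy}.
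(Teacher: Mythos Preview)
Your proposal is correct and follows essentially the same approach as the paper's own proof: the paper too says the argument is ``very similar to Theorem \ref{unified2},'' carries out the Fourier--Laplace computations on both equations, rationalizes via the cubic factor $s^{2/3}+s^{1/3}\psi(k)+\psi(k)^2$, and then declares the remainder identical to Theorem \ref{unified2}. Your write-up is in fact more detailed than the paper's, which simply defers the continuity/uniqueness portion back to the earlier theorem.
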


\begin{proof}
The proof is very similar to Theorem \ref{unified2}.  Take Fourier transforms on both sides of \eqref{one-third} to get
\[\frac{\partial \hat u(t,k)}{\partial t}  =
\frac{t^{-2/3}}{\Gamma(1/3)}\psi(k) \hat f(k)+\frac{t^{-1/3}}{\Gamma(2/3)}\psi(k)^2 \hat f(k)
+\psi(k)^3\hat u(t,k) ,\]
then take Laplace transforms on both sides to get
\[s\bar{u}(s,k)-\hat f(k)= s^{-1/3}\psi (k) \hat{f}(k)+s^{-2/3}\psi(k)^2 \hat f(k) +\psi(k)^3 \bar{u}(s,k) \]
and collect terms to obtain
\begin{equation}\label{LFT3}
\bar{u}(s,k)=\frac{(1+s^{-1/3}\psi (k)+s^{-2/3}\psi(k)^2)\hat{f}(k)}{s-\psi(k)^3}
\end{equation}
for $s>0$ sufficiently large.

On the other hand, taking Fourier transforms on both sides of \eqref{frac-derivative-0} with $\beta=1/3$ gives
\[\frac{\partial^\beta}{\partial t^\beta}\hat u(t,k) =
\psi(k)\hat u(t,k) ,\]
and then taking Laplace transforms yields
$$s^{1/3}\bar{u}(s,k)-s^{-2/3}\hat{f}(k)=\psi(k)\bar{u}(s,k) .$$
Collecting terms yields
\begin{equation}\label{LFT4}\begin{split}
\bar{u}(s,k)&=\frac{s^{-2/3}\hat{f}(k)}{s^{1/3}-\psi(k)}\\
&=\frac{s^{-2/3}\hat{f}(k)}{s^{1/3}-\psi(k)}\cdot
\frac{s^{2/3}+s^{1/3}\psi(k)+\psi(k)^2}{s^{2/3}+s^{1/3}\psi(k)+\psi(k)^2}\\
&= \frac{(1+s^{-1/3}\psi (k)+s^{-2/3}\psi(k)^2)\hat{f}(k)}{s-\psi(k)^3}
\end{split}\end{equation}
which agrees with \eqref{LFT3}.  For any fixed $k\in\rd$, the two
formulas are well-defined and equal for all $s>0$ sufficiently
large.  The remainder of the argument is identical to Theorem
\ref{unified2}.
\end{proof}

\begin{cor}
Suppose that $X(t)=x+X_0(t)$ where $X_0(t)$ is a L\'evy process
starting at zero, and let $E_t=\inf\{x>0:D_x>t\}$ be the inverse or
hitting time process of the $1/3$-stable subordinator $D_t$,
independent of $X$, with $E[e^{-sD_t}]=e^{-ts^{1/3}}$.  If $L_x$ is
the generator \eqref{pseudoDO} of the semigroup
$T(t)f(x)=E_x[(f(X_t))]$ and $Z_t=X(E_t)$, then for any initial
condition $f\in D(L_{x})$, the function $u(t,x)=E_x[f(Z_t)]$ solves
the higher order initial value problem \eqref{one-third}.
\end{cor}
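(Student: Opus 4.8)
The plan is to deduce this corollary with essentially no new computation, by combining Theorem~\ref{Esoln} applied with stable index $\beta=1/3$ and Theorem~\ref{third}. Between them the two theorems already contain everything needed: one identifies the stochastic expectation $E_x[f(Z_t)]$ with the explicit subordination formula \eqref{ACPsoln}, and the other says that this same formula solves \eqref{one-third}.

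In detail, first I would invoke Theorem~\ref{Esoln} with $\beta=1/3$. Since $X_0(t)$ is a L\'evy process, $T(t)f(x)=E_x[f(X_t)]$ is a uniformly bounded, strongly continuous semigroup on $L^1(\rd)$, so $X(t)$ qualifies as a ``continuous Markov semigroup'' in the sense of that theorem, and its generator is the pseudo-differential operator \eqref{pseudoDO}. Theorem~\ref{Esoln} then gives that $u(t,x)=E_x[f(Z_t)]$, with $Z_t=X(E_t)$, solves the fractional Cauchy problem \eqref{frac-derivative-0} with $\beta=1/3$; more to the point, its proof shows that a change of variables turns \eqref{ACPsoln} (with $g_{1/3}$ the density of $D_1$) into $\int_0^\infty p(x,s)\,q(t,s)\,ds$, where $p(x,s)=T(s)f(x)$ and $q(t,s)=t\beta^{-1}g_{1/3}(s^{-1/\beta}t)s^{-1/\beta-1}$ with $\beta=1/3$ is, by Corollary~3.1 of \cite{limitCTRW}, the density of the hitting time $E_t$, and a conditioning argument identifies $\int_0^\infty p(x,s)\,q(t,s)\,ds$ with $E_x[f(X(E_t))]$. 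Hence $u(t,x)=E_x[f(Z_t)]$ is precisely the function \eqref{ACPsoln} that appears in Theorem~\ref{third}.

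Second, I would apply Theorem~\ref{third} in this same L\'evy setting, with the same generator \eqref{pseudoDO} and the same class of initial data $f\in D(L_x)$. That theorem asserts that the function \eqref{ACPsoln}, with $g_{1/3}$ the $1/3$-stable density, is the unique solution of the higher order initial value problem \eqref{one-third}. Combining the two steps, $u(t,x)=E_x[f(Z_t)]$ solves \eqref{one-third}, which is the assertion. (Alternatively, one could bypass Theorem~\ref{third} and repeat its Fourier--Laplace computation directly, but there is no reason to.)

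I expect no genuine obstacle here: both halves of the argument have already been carried out, one inside Theorem~\ref{Esoln} and one inside Theorem~\ref{third}, and the only point requiring a moment's care is that the hypotheses of those two theorems are compatible with the L\'evy framework of the corollary --- a L\'evy semigroup is uniformly bounded and strongly continuous on $L^1(\rd)$, its generator is the operator \eqref{pseudoDO}, and the domains $D(L_x)$ used in the two theorems coincide. The single step worth spelling out, if one prefers a self-contained argument to quoting the proof of Theorem~\ref{Esoln}, is the conditioning identity $E_x[f(X(E_t))]=\int_0^\infty E_x[f(X_s)]\,q(t,s)\,ds$ together with the verification that $q(t,s)$ from Corollary~3.1 of \cite{limitCTRW} is the density of $E_t$ for the $1/3$-stable subordinator.
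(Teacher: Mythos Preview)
Your proposal is correct and matches the paper's own one-line proof, which simply cites Theorem~\ref{third} together with Theorem~3.1 in \cite{fracCauchy}. You spell out the identification of $E_x[f(Z_t)]$ with the subordination formula \eqref{ACPsoln} by going through Theorem~\ref{Esoln} (whose proof is exactly Theorem~3.1 of \cite{fracCauchy} plus the density result from \cite{limitCTRW}), so the two arguments are the same in substance, yours just being more explicit about that middle step.
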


\begin{proof}
The corollary follows immediately from Theorem \ref{third} together with Theorem 3.1 in \cite{fracCauchy}.
\end{proof}

\begin{remark}
An easy extension of the argument for Theorem \ref{third} shows
that, under the same conditions, for any $n=2,3,4,\ldots$ both the
initial value problem
\begin{equation}\begin{split}\label{one-thirda}
\frac{\partial u(t,x)}{\partial t} & =\sum_{j=1}^{n-1} \frac{t^{1-j/n}}{\Gamma(j/n)} L_x^j f(x)
+ L_x^n u(t,x);\quad  u(0,x) =  f(x)
\end{split}\end{equation}
and the fractional Cauchy problem \eqref{frac-derivative-0}  with
$\beta=1/n$, have the same unique solution given by \eqref{ACPsoln}
with $\beta=1/n$.  Hence the process $Z_t=X(E_t)$ is also the
stochastic solution to this higher order initial value problem.
\end{remark}

This paper has established a connection between processes
subordinated to a Brownian time subordinator, and to an inverse
L\'evy stable subordinator.  A similar but weaker correspondence can
also be established for stable time subordinators.  Let
$T(t)f(x)=E_x[f(X(t))]$ be the semigroup of a continuous Markov
process $X(t)$ and let $L_x$ be its generator. Take $S_t$ a standard
symmetric stable L\'evy process with index $0<\alpha< 2$, so that
$E[e^{ikS(t)}]=e^{-t|k|^\alpha}$.  Denote by $p(t,x)$ the density of
$S(t)$, and let $Z_t=X(|S_t|)$.  Nane \cite{nane3} shows that if
$\alpha=l/m$, where $l$ and $m$ are relatively prime, then
$u(t,x)=E_x[f(Z_t)]$ solves
\begin{equation*}
(-1)^{l+1}\frac{\partial ^{2m}}{\partial t^{2m}}u(t,x) =
-2\sum_{i=1}^{l} \frac{\partial^{2l-2i}}{\partial
x^{2l-2i}}p(t,x)\bigg\vert_{x=0}L_x^{2i-1}f(x)
 -\ L_x^{2l}u(t,x);\quad
u(0,x) = f(x)
\end{equation*}
for any bounded measurable function $f$ in the domain of $L_x$, with
$D^{\gamma}f$  bounded and H\"{o}lder continuous for all multi index
$\gamma$ such that $|\gamma |=2l$.

Suppose that $X(t)$ is a self-similar process with Hurst index $H$,
so  that $X(ct)\eqd c^H X(t)$ (equality in distribution).  Then it
is not hard to check that $Z_t=X(|S_t|)$ is also self-similar with
$Z_{ct}\eqd c^{H/\alpha}Z_t$.  If $1<\alpha\leq 2$ and we take $E_t$
to be the inverse or hitting time process for a stable subordinator
with index $\beta=1/\alpha$, then $E_{ct}\eqd c^\beta E_t$ and it
follows that the process $X(E_t)$ is self-similar with the same
index as $X(|S_t|)$.  Corollary 3.1 in \cite{limitCTRW} shows that
$E_t$ has moments of all orders, while the mean of $|S_t|^\rho$
diverges for $\rho>\alpha$.  Hence it seems that these two processes
are not equivalent.

Finally, we note that the equivalence established in this paper does not extend to strict subdomains of $\rd$.  Consider the Banach space $X=L^1(\rr^+)$ and the shift semigroup
$\left[T(t)f\right](x):=f(x+t)$ with generator
$L_xf=\frac d{dx}f$ and domain $\mathcal D(L_x)=\{f\in L^1(\rr^+):f'\in
L^1(\rr^+)\}$.  In this case, solutions to the higher order initial value problem
   \begin{equation}\label{hopde}
\frac{\partial}{\partial t}u(t,x) = \frac{{L_x}f(x)}{\sqrt{\pi t}}+
{L_x}^{2}u(t,x); \quad u(0,x) = f(x);x\ge 0
\end{equation}
are not unique.  To see this, note that if $u_1,u_2$ were any two
solutions then $u=u_1-u_2$ would solve
\begin{equation}\label{nonunique}
\frac{\partial}{\partial t}u(t,x) = {L_x}^{2}u(t,x); \quad
u(0,x)=0,x\ge 0
\end{equation}
and uniqueness would require that $u\equiv 0$ is the only solution
to \eqref{nonunique}.  However,
$$u(t,x)=\frac1{\sqrt{4\pi t}}\exp\left(-\frac{(x+1)^2}{4t}\right)$$
solves \eqref{nonunique} as
well, so that solutions to \eqref{hopde} are not unique.  On the other hand, solutions to the corresponding fractional Cauchy problem \eqref{frac-derivative-0} with $\beta=1/2$ are unique \cite{Bajlekova2001}.  Hence the two forms are not equivalent on this domain.  It is an interesting open problem to find governing partial differential equations for Brownian time processes on bounded subdomains.  For a typical Markov process with generator $L_x$ on a bounded domain, one can solve the boundary value problem $\dot u=L_x u$; $u=f$ on the boundary, as the expectation of $X(\tau)$ where $\tau$ is the hitting time at the boundary.  DeBlassie \cite{deblassie} shows that the analogous result does not hold for iterated Brownian motion, and it is likely that a resolution of these problems will require a novel approach.

\end{document}